\begin{document}

\title{Application of the Inclusion-Exclusion Principle to Prime Number Subsequences}

% Please mark \corrauth after the name of the corresponding author.
\author[M.P. May] {Michael P. May}

\address{Department of Mechanical and Aerospace Engineering, Missouri University of Science and Technology, Rolla, Missouri, USA\\}
% \affilnum{2}\ Department of Mechanical & Aerospace Engineering, Missouri University of Science and Technology, Roll, Missouri, USA}

\emails{{\tt mike.may.bbi@gmail.com}}

\begin{abstract}
We apply the Inclusion-Exclusion Principle to a unique pair of prime number subsequences to determine whether these subsequences form a small set or a large set and thus whether the infinite sum of the inverse of their terms converges or diverges. In this paper, we analyze the complementary prime number subsequences $\mathbb{P}'$ and $\mathbb{P}''$ as well as revisit the twin prime subsequence $\mathbb{P}_2$.
\end{abstract}

\keywords{Prime numbers, Higher-order prime number sequences, Inclusion-Exclusion Principle, Sums over prime reciprocals, Prime density, Sieves}
\ams{11A41, 11L20, 11B05, 11K31, 11N36}

\maketitle

\section{The Prime Subsequences ${\mathbb{P{'}}}$ and ${\mathbb{P{''}}}$}

\noindent The prime number subsequence \cite{OEIS_A333242}

\begin{equation*}
\mathbb{P^{'}} = \left\lbrace {p{'}} \right\rbrace = \left\lbrace 2,5,7,13,19,23,29,31,37,43,47,53,59,61,71,... \right\rbrace
\end{equation*}

\noindent can be generated via an alternating sum of the prime number subsequences of increasing order \cite{May2020}, i.e., 

\begin{equation}
{\mathbb{P^{'}}={\left\lbrace{(-1)^{n-1}}\left\lbrace{p^{(n)}}\right\rbrace\right\rbrace}_{n=1}^\infty} \label{eq:1}
\end{equation}

\noindent where the right-hand side of Eq. \ref{eq:1} is an expression of the alternating sum 

\begin{equation}
\left\lbrace{p^{(1)}}\right\rbrace - \left\lbrace{p^{(2)}}\right\rbrace + \left\lbrace{p^{(3)}}\right\rbrace - \left\lbrace{p^{(4)}}\right\rbrace + \left\lbrace{p^{(5)}}\right\rbrace -... \;. \label{eq:2}
\end{equation} \par

\noindent The prime number subsequences of increasing order \cite{BroughanBarnett2009} in Expression \ref{eq:2} are defined as

\newpage

\begin{equation*}
\left\lbrace{p^{(1)}}\right\rbrace = {\left\lbrace{p_n}\right\rbrace}_{n=1}^\infty = \left\lbrace2,3,5,7,11,13,17,19,23,29,31,37,41,43,...\right\rbrace 
\end{equation*}

\begin{equation*}
\left\lbrace{p^{(2)}}\right\rbrace = {\left\lbrace{p_{p_n}}\right\rbrace}_{n=1}^\infty = \left\lbrace3,5,11,17,31,41,59,67,83,109,127,...\right\rbrace 
\end{equation*}

\begin{equation*}
\left\lbrace{p^{(3)}}\right\rbrace = {\left\lbrace{p_{p_{p_n}}}\right\rbrace}_{n=1}^\infty = \left\lbrace5,11,31,59,127,179,277,331,...\right\rbrace 
\end{equation*}

\begin{equation*}
\left\lbrace{p^{(4)}}\right\rbrace = {\left\lbrace{p_{p_{p_{p_n}}}}\right\rbrace}_{n=1}^\infty = \left\lbrace11,31,127,277,709,...\right\rbrace 
\end{equation*}

\begin{equation*}
\left\lbrace{p^{(5)}}\right\rbrace = {\left\lbrace{p_{p_{p_{p_{p_n}}}}}\right\rbrace}_{n=1}^\infty = \left\lbrace31,127,709,...\right\rbrace 
\end{equation*}

\noindent and so on and so forth. Thus, the operation performed on the right-hand side of Eq. \ref{eq:1} denotes an infinite alternating sum of the sets of prime number subsequences of increasing order.

\

\noindent The prime number subsequence $\mathbb{P^{'}}$ can also be generated by performing a structured alternating summation of the individual elements across the sets denoted on the right-hand side of Eq. \ref{eq:1}. To illustrate this, we arrange the subsequences of increasing order \cite{BroughanBarnett2009} in Expression \ref{eq:2} side-by-side and sum elements laterally across the rows to create the $\mathbb{P^{'}}$ subsequence term-by-term as follows:

\begin{table}[ht]
\centering
\caption{Alternating Sum of $p^{(n)}$}
\label{table:sum}
\begin{tabular}{cccccccccc}
(row) & $+p^{(1)}$ & $-p^{(2)}$ & $+p^{(3)}$ & $-p^{(4)}$ & $+p^{(5)}$ & $-p^{(6)}$ & $...$ & $p{'}$ \\ 
&  &  &  &  &  &  &  &  &  \\ 
(1) & 2 & $\longrightarrow$ & $\longrightarrow$ & $\longrightarrow$ & $\longrightarrow$ & $\longrightarrow$ & $\longrightarrow$ &  2 \\ 
(2) & 3 & 3 & $\longrightarrow$ & $\longrightarrow$ & $\longrightarrow$ & $\longrightarrow$ & $\longrightarrow$ &  0 \\ 
(3) & 5 & 5 & 5 & $\longrightarrow$ & $\longrightarrow$ & $\longrightarrow$ & $\longrightarrow$ &  5 \\ 
(4) & 7 & $\longrightarrow$ & $\longrightarrow$ & $\longrightarrow$ & $\longrightarrow$ & $\longrightarrow$ & $\longrightarrow$ &  7 \\ 
(5) & 11 & 11 & 11 & 11 & $\longrightarrow$ & $\longrightarrow$ & $\longrightarrow$ &  0 \\ 
(6) & 13 & $\longrightarrow$ & $\longrightarrow$ & $\longrightarrow$ & $\longrightarrow$ & $\longrightarrow$ & $\longrightarrow$ &  13 \\ 
(7) & 17 & 17 & $\longrightarrow$ & $\longrightarrow$ & $\longrightarrow$ & $\longrightarrow$ & $\longrightarrow$ &  0 \\ 
(8) & 19 & $\longrightarrow$ & $\longrightarrow$ & $\longrightarrow$ & $\longrightarrow$ & $\longrightarrow$ & $\longrightarrow$ &  19 \\ 
(9) & 23 & $\longrightarrow$ & $\longrightarrow$ & $\longrightarrow$ & $\longrightarrow$ & $\longrightarrow$ & $\longrightarrow$ &  23 \\ 
(10) & 29 & $\longrightarrow$ & $\longrightarrow$ & $\longrightarrow$ & $\longrightarrow$ & $\longrightarrow$ & $\longrightarrow$ &  29 \\ 
(11) & 31 & 31 & 31 & 31 & 31 & $\longrightarrow$ & $\longrightarrow$ &  31 \\ 
$\vdots$ & $\vdots$ & $\vdots$ & $\vdots$ & $\vdots$ & $\vdots$ & $\vdots$ & $\vdots$ & $\vdots$  
\end{tabular}
\end{table}

\noindent Thus, the infinite prime number subsequence $\mathbb{P^{'}}$ of higher order \cite{OEIS_A333242} that emerges in the rightmost column of Table \ref{table:sum} is

\begin{equation*}
{\mathbb{P^{'}} = \left\lbrace {p{'}} \right\rbrace} = \left\lbrace 2,5,7,13,19,23,29,31,37,43,47,53,59,61,71,... \right\rbrace.
\end{equation*}

\noindent The prime number subsequence $\mathbb{P^{'}}$ can also be generated by performing a sieving operation on the natural numbers $\mathbb{N}$ \cite{May2020}. Starting with $n=1$, choose the prime number with subscript 1 (i.e., $p_1=2$) as the first term of the subsequence and eliminate that prime number from the natural number line.  Next, proceed forward on $\mathbb{N}$ from $1$ to the next available natural number.  Since $2$ was eliminated from the natural number line in the previous step, one moves forward to the next available natural number that has not been eliminated, which is $3$.  The prime number $3$ then becomes the subscript for the next $\mathbb{P^{'}}$ term which is $p_3=5$, and $5$ is then eliminated from the natural number line, and so on and so forth.  Such a sieving operation has been carried out in Table \ref{table:sieve} for the natural numbers $1$ to $100$: \\

\begin{table}[ht]
\centering
\caption{Sieving $\mathbb{N}$ to generate $\mathbb{P}'$}
\label{table:sieve}
\begin{tabular}{ccccccccccc}
&  &  &  &  &  &  &  &  &  & \\ 
1 & {\textbf{2}} & 3 & 4 & {\textbf{5}} & 6 & {\textbf{7}} & 8 & 9 & 10 \\ 
11 & 12 & {\textbf{13}} & 14 & 15 & 16 & 17 & 18 & {\textbf{19}} & 20 \\ 
21 & 22 & {\textbf{23}} & 24 & 25 & 26 & 27 & 28 & {\textbf{29}} & 30 \\ 
{\textbf{31}} & 32 & 33 & 34 & 35 & 36 & {\textbf{37}} & 38 & 39 & 40 \\ 
41 & 42 & {\textbf{43}} & 44 & 45 & 46 & {\textbf{47}} & 48 & 49 & 50 \\ 
51 & 52 & {\textbf{53}} & 54 & 55 & 56 & 57 & 58 & {\textbf{59}} & 60 \\ 
{\textbf{61}} & 62 & 63 & 64 & 65 & 66 & 67 & 68 & 69 & 70 \\
{\textbf{71}} & 72 & {\textbf{73}} & 74 & 75 & 76 & 77 & 78 & {\textbf{79}} & 80 \\ 
81 & 82 & 83 & 84 & 85 & 86 & 87 & 88 & {\textbf{89}} & 90 \\ 
91 & 92 & 93 & 94 & 95 & 96 & {\textbf{97}} & 98 & 99 & 100 \\ 
\end{tabular}
\end{table}

\noindent Thus, we may also designate $\mathbb{P{'}}$, which has been alternately created via the sieving operation in Table \ref{table:sieve}, by the following notation \cite{May2020} to indicate that the natural numbers $\mathbb{N}$ have been sieved to produce this prime number subsequence:

\begin{equation*}
{\left\lfloor{\raisebox{-.3pt}{\!{{\begin{math}\, \underline{\mathbb{N}}\end{math}}}}}\right\rfloor} = {\mathbb{P{'}}} = \left\lbrace{2,5,7,13,19,23,29,31,37,43,47,53,59,61,71,...}\right\rbrace.
\end{equation*}

\noindent Regardless of which one of these three methods is used to generate $\mathbb{P{'}}$, when the prime numbers in this unique subsequence are applied as indexes to the set of all prime numbers $\mathbb{P^{}}$, one obtains the next higher-order prime number subsequence $\mathbb{P{''}}$ \cite{OEIS_A262275}:

\begin{equation*}
{\mathbb{P^{''}} = \left\lbrace {p{''}} \right\rbrace} = \left\lbrace3,11,17,41,67,83,109,127,157,191,211,241,...\right\rbrace. 
\end{equation*}
 
\noindent By definition, the sequence $\mathbb{P{''}}$ can also be generated via the expression \cite{May2020}

\begin{equation}
{\mathbb{P^{''}}={\left\lbrace{(-1)^{n}}\left\lbrace{p^{(n)}}\right\rbrace\right\rbrace}_{n=2}^\infty} \label{eq:3}
\end{equation}

\noindent where an expansion of the right-hand side of Eq. \ref{eq:3} reveals the alternating sum 

\begin{equation*}
\left\lbrace{p^{(2)}}\right\rbrace - \left\lbrace{p^{(3)}}\right\rbrace + \left\lbrace{p^{(4)}}\right\rbrace - \left\lbrace{p^{(5)}}\right\rbrace + \left\lbrace{p^{(6)}}\right\rbrace -... \;. \label{eq:4}
\end{equation*}

\noindent The prime number subsequence of higher order $\mathbb{P^{''}}$ can also be generated by performing the aforementioned sieving operation on the set of all prime numbers $\mathbb{P}$, similar to how the primes $\mathbb{P^{'}}$ were sifted from the set of all natural numbers $\mathbb{N}$. Furthermore, it has been shown \cite{May2020} that the subsequences $\mathbb{P^{'}}$ and $\mathbb{P^{''}}$, when added together, form the entire set of prime numbers $\mathbb{P^{}}$:

\begin{equation}
{\mathbb{P{}}}={\mathbb{P{'}}+\mathbb{P{''}}}. \label{eq:5}
\end{equation}

\noindent We sketch a proof of Eq. \ref{eq:5} here:

\

\begin{proof}

\noindent It has been shown \cite{May2020} that 

\begin{equation*}
{\mathbb{P{'}}} = {\left\lbrace{(-1)^{n-1}}\left\lbrace{p^{(n)}}\right\rbrace\right\rbrace}_{n=1}^\infty = \left\lbrace{p^{(1)}}\right\rbrace - \left\lbrace{p^{(2)}}\right\rbrace + \left\lbrace{p^{(3)}}\right\rbrace - ... 
\end{equation*}

\noindent and

\begin{equation*}
{\mathbb{P{''}}} = {\left\lbrace{(-1)^{n}}\left\lbrace{p^{(n)}}\right\rbrace\right\rbrace}_{n=2}^\infty = \left\lbrace{p^{(2)}}\right\rbrace - \left\lbrace{p^{(3)}}\right\rbrace + \left\lbrace{p^{(4)}}\right\rbrace - ... \; .
\end{equation*}

\noindent Therefore,

\begin{align*}
{\mathbb{P{'}}+\mathbb{P{''}}} = & \left\lbrace{p^{(1)}}\right\rbrace - \left\lbrace{p^{(2)}}\right\rbrace + \left\lbrace{p^{(3)}}\right\rbrace - ... \\
& \;\;\;\;\;\;\;\;\;\;\;\;\;\;\;\;\;\;\;\; + \\
& \left\lbrace{p^{(2)}}\right\rbrace - \left\lbrace{p^{(3)}}\right\rbrace + \left\lbrace{p^{(4)}} \right\rbrace - ... = \left\lbrace{p^{(1)}}\right\rbrace = {\mathbb{P{}}}.
\end{align*}

\end{proof}

\noindent An interesting property was observed in the relationship between the set of all prime numbers $\mathbb{P{}}$ and the complementary prime number sets $\mathbb{P{'}}$ and $\mathbb{P{''}}$. Since $\mathbb{P{''}}=\mathbb{P}_{\mathbb{P{'}}}$, Eq. \ref{eq:5} can be rewritten as

\begin{align*}
{\mathbb{P{''}}} & {=\mathbb{P{}} - \left\lbrace2,5,7,13,19,23,29,...\right\rbrace} \\
& {=\left\lbrace {p_{{}_{{}_{{{2}}}}},p_{{}_{{}_{{{5}}}}},p_{{}_{{}_{{{7}}}}},p_{{}_{{}_{{{13}}}}},p_{{}_{{}_{{{19}}}}},p_{{}_{{}_{{{23}}}}}, p_{{}_{{}_{{{29}}}}},...}\right\rbrace = \mathbb{P}_{\mathbb{P{'}}}}
\end{align*}

\noindent where the prime numbers of the subsequence $\mathbb{P{'}}$ form the indexes for the complement set of primes $\mathbb{P{''}}$ such that 

\begin{equation*}
{{\mathbb{P{''}}} = {\mathbb{P}_{{\mathbb{P{'}}}} = \lbrace p_k \mid k \in \mathbb{P{'}} \rbrace}}.
\end{equation*}

\section{Asymptotic Densities of ${\mathbb{P{'}}}$ and ${\mathbb{P{''}}}$}

\noindent We will now derive the asymptotic densities for the prime number subsequences $\mathbb{P{'}}$ and $\mathbb{P{''}}$ assuming that $1/\ln{n}$ is the asymptotic density of the set of all prime numbers $\mathbb{P{}}$ as $n \rightarrow \infty$. We approach this task by alternately adding and subtracting the prime number densities (or ``probabilities") of the prime number subsequences of increasing order, also known as "superprimes" \cite{BroughanBarnett2009}, to arrive at values for the asymptotic densities for $\mathbb{P{'}}$ and $\mathbb{P{''}}$.  We begin by recalling \cite{May2020} that the prime number subsequence $\mathbb{P{'}}$ is formed by the alternating series

\begin{equation*}
{\mathbb{P{'}}} = {\left\lbrace{(-1)^{n-1}}\left\lbrace{p^{(n)}}\right\rbrace\right\rbrace}_{n=1}^\infty = \left\lbrace{p^{(1)}}\right\rbrace - \left\lbrace{p^{(2)}}\right\rbrace + \left\lbrace{p^{(3)}}\right\rbrace -  ... 
\end{equation*}

\noindent where

\begin{equation*}
\left\lbrace{p^{(k)}}\right\rbrace = \left\lbrace{{p_{p_{._{._{._{p_n}}}}}}}\right\rbrace \; \text{($p$ ``$k$" times)}.
\end{equation*}

\

\noindent Broughan and Barnett have shown \cite{BroughanBarnett2009} that for the general case of higher-order ``superprimes" ${p_{p_{._{._{._{p_{k}}}}}}}$, the asymptotic density is approximately

\begin{equation*}
{\dfrac{n}{p_{p_{._{._{._{p_n}}}}}} \sim \dfrac{n}{n\,(\ln{n})^k} \sim \dfrac{1}{(\ln{n})^k} } \;\;\; 
\end{equation*}

\noindent for large $n \in \mathbb{N{}}$. Now, assuming that $1/\ln{n}$ is the asymptotic density for the set of all prime numbers $\mathbb{P{}}$, we derive an expression for the density $d'$ for the prime number subsequence $\mathbb{P{'}}$ at $\infty$. We begin with the geometric series

\begin{equation*}
{S=1-x+x^2-x^3+x^4-x^5+... = \dfrac{1}{1+x} \;\;\; (\lvert x \rvert < 1)}.
\end{equation*}

\noindent Then let

\begin{align*}
{T'} \;\; & {= 1 - S}
\end{align*}

\noindent so that

\begin{align*}
{T'} \;\; & {= x - x^2 + x^3 - x^4 + x^5+...} \\
& {= \dfrac{-1}{1+x}+1} \\
& {= \dfrac{x}{1+x}}.
\end{align*}

\noindent Now substitute $\dfrac{1}{\ln{n}}$ for $x$ to get

\begin{equation*}
{\dfrac {\dfrac{1}{\ln{n}}}{1+\dfrac{1}{\ln{n}}} = \dfrac{1}{\ln{n}+1}}
\end{equation*}

\noindent so that we have

\begin{align}
{d'}  & { \approx \dfrac{1}{\ln{n}}-\dfrac{1}{(\ln{n})^2}+\dfrac{1}{(\ln{n})^3}-\dfrac{1}{(\ln{n})^4}+...} \\
& {= \dfrac{1}{\ln{n}+1}}.  \label{eq:6}
\end{align}

\noindent Similarly, we derive the asymptotic density for the prime number subsequence $\mathbb{P{''}}$.  When we set

\begin{align*}
{T''} \;\; & {= S - (1 - x)}
\end{align*}

\noindent we have

\begin{align*}
{T''} \;\; & {= S - (1 - x)} \\
& {= {x^2 - x^3 + x^4 - x^5 +...}} \\
& {= \dfrac{1}{1+x}-1+x} \\
& {= \dfrac{x^2}{1+x}}.
\end{align*}

\noindent Now substitute $\dfrac{1}{\ln{n}}$ for $x$ to get

\begin{equation*}
{\dfrac {\left(\dfrac{1}{\ln{n}}\right)^2}{1+\dfrac{1}{\ln{n}}} = \dfrac{1}{\ln{n}(\ln{n}+1)}}
\end{equation*}

\noindent so that

\begin{align}
{d''}  & { \approx \dfrac{1}{(\ln{n})^2}-\dfrac{1}{(\ln{n})^3}+\dfrac{1}{(\ln{n})^4}-\dfrac{1}{(\ln{n})^5}+...} \\
& {= \dfrac{1}{\ln{n}(\ln{n}+1)}}.  \label{eq:7}
\end{align}

\noindent Based on our assumption that $1/\ln{n}$ is the asymptotic density of the set of all prime numbers $\mathbb{P{}}$ as $n \rightarrow \infty$, Eqs. \ref{eq:6} and \ref{eq:7} provide us with the densities (or probabilities of occurrence) of the primes in the complementary sets $\mathbb{P{'}}$ and $\mathbb{P{''}}$, respectively, as $n$ approaches $\infty$.  Thus, the average gap size $g'$ between prime numbers in the subsequence $\mathbb{P{'}}$ on the natural number line as $n \rightarrow \infty$ is the inverse of the density $d'$ of $\mathbb{P{'}}$ such that

\begin{align*}
{{g'}} = {\frac{1}{d'}} & \approx {\dfrac{1}{\dfrac{1}{\ln{n}}-\dfrac{1}{(\ln{n})^2}+\dfrac{1}{(\ln{n})^3}-\dfrac{1}{(\ln{n})^4}+...}} \\
& = {\ln{n}+1}.
\end{align*}

\noindent Similarly, the average gap size $g''$ between prime numbers in the subsequence $\mathbb{P{''}}$ on the natural number line as $n \rightarrow \infty$ is the inverse of the density $d''$ of $\mathbb{P{''}}$ such that

\begin{align*}
{{g''}} = {\frac{1}{d''}} & \approx {\dfrac{1}{\dfrac{1}{(\ln{n})^2}-\dfrac{1}{(\ln{n})^3}+\dfrac{1}{(\ln{n})^4}-\dfrac{1}{(\ln{n})^5}+...}} \\
& = {\ln{n}(\ln{n}+1)}.
\end{align*}

\noindent Since it has been shown via the sieving operation \cite{May2020} that the prime number subsequence $\mathbb{P{'}}$ has fewer primes than the set of all prime numbers $\mathbb{P{}}$, it intuitively follows that the average gap size for $\mathbb{P{'}}$ will always be larger than the gap size for $\mathbb{P{}}$ and that the larger gap size for $\mathbb{P{'}}$ results from omitting the count of the prime numbers $\mathbb{P{''}}$ on $\mathbb{N}$.

\section{${\pi{'(x)}}$ and ${\pi{''(x)}}$}

\noindent We have shown that when we remove the prime number subsequence $\mathbb{P{''}}$ from the set of all prime numbers $\mathbb{P{}}$, we create the prime number subsequence $\mathbb{P{'}}$ \cite{May2020}.  Thus, we define the prime number count for the sequences $\mathbb{P{'}}$ and $\mathbb{P{''}}$ up to $x$ as

\begin{center}
${\pi{'(x)}} = \vert \mathbb{P'}(x) \vert$
\end{center}

\noindent and

\begin{center}
${\pi{''(x)}} = \vert \mathbb{P''}(x) \vert$
\end{center}

\noindent where $\vert \mathbb{P'}(x) \vert$ and $\vert \mathbb{P''}(x) \vert$ represent the cardinality of the prime number subsequences $\mathbb{P{'{}}}$ and $\mathbb{P{''{}}}$ up to $x$.  However, since neither ${\pi{'(x)}}$ nor ${\pi{''(x)}}$ have been shown up to this point to be calculable without manually counting each term up to $x$, we will begin by generating an estimate of the count ${\pi{(x)}}$ of set of all primes $\mathbb{P{}}$ up to $x$ via the Inclusion-Exclusion Principle and then perform an operation on that result to reduce the count of all primes down to ${\pi{'(x)}}$ and ${\pi{''(x)}}$.

\newpage

\section{${\pi{(x)}}$ via the Inclusion-Exclusion Principle}

\noindent To calculate $\pi{(x)}$, we invoke the Inclusion-Exclusion Principle \cite{McKernan2017} \cite{Leveque1977}. Let $r$ represent the number of primes less than $\sqrt{x}$. Then let $P = \lbrace n \in N \vert 1 < n \leq x \rbrace$ such that $n$ is not a multiple of $p_1,p_2,...,p_r$. If $A(x,r)$ represents the cardinality of $P$, then it follows that the number of primes $\leq x$ is

\begin{center}
\noindent ${\pi{(x)}} \leq r + A(x,r)$.
\end{center}

\noindent Now, let $M_i$ be the set of integers from $1$ to $n$ which are multiples of $p_i$, and let $M_{ij}$ be the set of integers from $1$ to $n$ that are multiples of both $p_i$ and $p_j$. Then,

\begin{center}
\noindent ${M_{ij} = M_i \cap M_j}$
\end{center}

\noindent so that

\begin{center}
\noindent ${\vert M_{i} \vert = \lfloor {\dfrac {x}{p_i}} \rfloor}$ \; and \; ${\vert M_{ij} \vert = \lfloor {\dfrac {x}{p_i p_j}} \rfloor}$.
\end{center}

\noindent Then it follows by the Inclusion-Exclusion Principle that

\begin{equation}
{A(x,r) = \lfloor x \rfloor - {\sum_{i=1}^{r}} \lfloor {\dfrac{x}{p_i}} \rfloor + {\sum_{i<j\leq r}^{r}} \lfloor {\dfrac{x}{p_i p_j}} \rfloor - \ldots +(-1)^r \lfloor {\dfrac{x}{p_1 p_2 \cdots p_r}} \rfloor}. \label{eq:8}
\end{equation}

\noindent If we approximate the RHS of Eq. \ref{eq:8} by ignoring the round-downs, then we have

\begin{equation*}
{ x  - {\sum_{i=1}^{r}}  {\dfrac{x}{p_i}}  + {\sum_{i<j\leq r}^{r}}  {\dfrac{x}{p_i p_j}} - \ldots +(-1)^r  {\dfrac{x}{p_1 p_2 \cdots p_r}} }
\end{equation*}

\noindent with an error of at most

\begin{equation*}
{1 + \binom{r}{1} + \binom{r}{2} + \ldots + \binom{r}{r} = 2^r}.
\end{equation*}

\noindent Thus, we now have for our estimate of the number of primes less than or equal to $x$ as

\begin{equation}
{\pi{(x)}} \leq r + {x \cdot \prod_{i=1}^{r} \left(1 - \dfrac{1}{p_i}\right) + 2^r}. \label{eq:9}
\end{equation}

\noindent We now want to choose $r$ relatively small compared to $x$. In order to do so, we need a good estimate of the coefficient of $x$ in the middle term on the RHS of \ref{eq:9} in terms of $r$.

\begin{theorem} \label{1}

If $x \geq 2$, then ${\prod_{p\leq x} \left( 1 - \dfrac{1}{p} \right)} < \dfrac{1}{\ln{x}}$.

\end{theorem}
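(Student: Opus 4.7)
The plan is to bound the product from above by bounding its reciprocal from below, using the classical Eulerian identification of $\prod_{p \leq x}(1-1/p)^{-1}$ with a partial sum of the harmonic series. First I would expand each factor as a geometric series,
$$\left(1 - \frac{1}{p}\right)^{-1} = 1 + \frac{1}{p} + \frac{1}{p^2} + \cdots,$$
and multiply out across the finitely many primes $p \leq x$. By unique factorization, the resulting double sum equals $\sum_{n \in S} 1/n$, where $S \subseteq \mathbb{N}$ is the set of positive integers all of whose prime factors are at most $x$.

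Next I would observe that every positive integer $n \leq x$ lies in $S$, since its prime factors are themselves at most $x$. Dropping the remaining (nonnegative) terms then gives
$$\prod_{p \leq x}\left(1 - \frac{1}{p}\right)^{-1} \;\geq\; \sum_{n=1}^{\lfloor x \rfloor} \frac{1}{n}.$$
A standard integral comparison, using that $1/n > 1/t$ strictly on $(n,n+1]$, yields
$$\sum_{n=1}^{\lfloor x \rfloor} \frac{1}{n} \;>\; \int_{1}^{\lfloor x \rfloor + 1} \frac{dt}{t} \;=\; \ln(\lfloor x \rfloor + 1) \;>\; \ln x,$$
where the final inequality uses $\lfloor x \rfloor + 1 > x$. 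Inverting the resulting strict inequality $\prod_{p \leq x}(1-1/p)^{-1} > \ln x$ delivers the claim.

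The proof is almost entirely routine; the only point demanding any care is justifying the product-to-sum expansion, but since only finitely many primes lie below $x$, this reduces to the elementary fact that a finite product of absolutely convergent series may be multiplied out term-by-term. A secondary item is verifying that the strict inequality survives at the boundary $x = 2$, where $\prod_{p \leq 2}(1 - 1/p)^{-1} = 2$ and $\sum_{n=1}^{2} 1/n = 3/2 > \ln 3 > \ln 2$; the strictness is provided by the integral comparison step. No deep input about primes is needed beyond unique factorization, which is why this bound is much weaker than Mertens' theorem but sufficient for controlling the middle term of inequality (9) when choosing $r$ small relative to $x$.
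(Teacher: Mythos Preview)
Your argument is correct and follows essentially the same route as the paper's proof: invert the product, expand via geometric series and unique factorization to dominate the partial harmonic sum, then compare to $\int_1 \frac{dt}{t}$ to extract $\ln x$. If anything, your version is tidier---you spell out the role of unique factorization and use $\lfloor x\rfloor+1$ as the integral's upper limit, which preserves strictness even when $x$ is an integer, whereas the paper's $\lceil x\rceil$ does not.
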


\begin{proof}

\begin{equation*}
{\prod_{p \leq x} \dfrac{1}{1 - \dfrac{1}{p}}} = {\prod_{p \leq x} \left(1 + \dfrac{1}{p} + \dfrac{1}{p^2} +\ldots \right)}.
\end{equation*}

\noindent Now,

\begin{equation*}
{\prod_{p \leq x} \dfrac{1}{1 - \dfrac{1}{p}}} > {\sum_{k=1}^n\dfrac{1}{k}} > {\int_{1}^{\lceil x \rceil} \dfrac{du}{u}} > \ln{x}.
\end{equation*}

\begin{equation*}
{{\therefore}} \;\; {\prod_{p \leq x} \left(1 - \dfrac{1}{p}\right)} < \dfrac{1}{\ln{x}} \Rightarrow {\prod_{i=1}^{r} \left(1 - \dfrac{1}{p_{i}}\right)} < \dfrac{1}{\ln{p_{r}}}.
\end{equation*}

\end{proof}

\noindent We now have as our estimate of $\pi{(x)}$,

\begin{equation}
{{\pi{(x)}} \leq r + \dfrac{x}{\ln{p_r}}  + 2^r}. \label{eq:10}
\end{equation}

\section{Estimating ${\pi{''(x)}}$}

\noindent In order to calculate an estimate of $\pi{''(x)}$, we begin by taking a look at the coefficient of $x$ in the middle term on the RHS of \ref{eq:9}. We can write that coefficient as

\begin{equation}
{\prod_{i=1}^{r} \left(1 - \dfrac{1}{p_i}\right) = \prod_{i=1}^{r'} \left(1 - \dfrac{1}{p'_i}\right) \cdot \prod_{i=1}^{r''} \left(1 - \dfrac{1}{p''_i}\right) } \label{eq:11}
\end{equation}

\noindent where $r'$ is the number of $p' <$ the number of the first $r$ primes $\leq \sqrt{x}$, and $r''$ is the number of $p'' <$ the number of the first $r$ primes $\leq \sqrt{x}$ such that $r'+r''=r$.  We found that one cannot simply divide the product on the LHS of Eq. \ref{eq:11} by either product on the RHS of Eq. \ref{eq:11} and expect the quotient to represent a pure count of $p'$ or $p'' \leq x$.  Therefore, we must approach the problem from a different direction; i.e., we must find another way to reduce the coefficient of $x$ on the RHS of \ref{eq:9} such that the estimate will leave the count of $p''$ \underline{only} with no $p'$ and no composites remaining when the coefficient is multiplied by $x$. Hence, we model the inequality in Theorem \ref{1} as

\begin{equation}
{\prod_{i=1}^{r} \left(1 - \dfrac{1}{p_i}\right) < \dfrac{1}{\ln{p_r}} = \dfrac{1}{{\ln^j{p_r}}\cdot {\ln^k{p_r}}}                    }. \label{eq:12}
\end{equation}

\noindent It was found that if the last term on the RHS of Eq. \ref{eq:12} is multiplied by either $\ln^j{p_r}$ or $\ln^k{p_r}$, then the resultant value is greater than

\begin{equation*}
{ \dfrac{1}{\ln{p_r}}  }
\end{equation*}

\noindent which is counterintuitive to our proof that the complementary prime number subsequences $\mathbb{P'}$ and $\mathbb{P''}$ add to form the complete set of prime numbers $\mathbb{P}$. Multiplying the last term of Eq. \ref{eq:12} by either $\ln^j{p_r}$ or $\ln^k{p_r}$ actually increases the cardinality of $\mathbb{P'}(x)$ or $\mathbb{P''}(x)$ to be greater than the cardinality of the entire set of prime numbers $\mathbb{P}(x)$ when that coefficient is multiplied by $x$ on the RHS of \ref{eq:9}. Hence our motivation to approach the solution from a different direction. In that light, it was found that if we let 

\begin{equation}
{ \dfrac{1}{{\ln^j{p_r}}\cdot {\ln^k{p_r}}} =  \dfrac{1}{\ln{p_r}} \cdot \biggl[ {j+k} \biggr]} \label{eq:13}
\end{equation}

\noindent we can then subtract 

\begin{equation*}
{ \dfrac{j}{\ln{p_r}}} \;\;\; \text{or} \;\;\; { \dfrac{k}{\ln{p_r}}}
\end{equation*}

\noindent from the LHS of Eq. \ref{eq:13} (or from the RHS of Eq. \ref{eq:12}) and obtain the proper coefficient to multiply times $x$ on the RHS of \ref{eq:10} to obtain the correct estimate of the quantity of $\mathbb{P'}(x)$ or $\mathbb{P''}(x)$ depending upon which of these quantities is subtracted from the RHS of Eq. \ref{eq:13}. So the task at hand is to find a $j$ and a $k$ that will satisfy both sides of Eq. \ref{eq:13}. To that end, it is seen in Eq. \ref{eq:13} that $j$ and $k$ must sum to unity on both sides of the equation to make this approach work. In order to do so, we recall the asymptotic densities that we derived earlier for ${\pi{'(x)}}$ and ${\pi{''(x)}}$ as

\begin{align*}
{\pi'{(x)} \sim \dfrac{1}{\ln{n}+1}} \;\; \text{and} \;\; {\pi''{(x)} \sim \dfrac{1}{\ln{n}(\ln{n}+1)}}.
\end{align*}

\noindent Now, since $j+k=1$ must hold true to satisfy Eq. \ref{eq:13}, we let

\begin{equation*}
{j = \dfrac{{\dfrac{1}{\ln{n}(\ln{n}+1)}}}{\dfrac{1}{\ln{n}}}} = \dfrac{\ln{n}}{\ln{n}(\ln{n}+1)}
\end{equation*}

\noindent and

\begin{equation*}
{k = \dfrac{{\dfrac{1}{\ln{n}+1}}}{\dfrac{1}{\ln{n}}}} = \dfrac{\ln{n}}{\ln{n}+1}
\end{equation*}

\noindent such that $j=$ the ratio of the asymptotic density of the prime subsequence $\mathbb{P''}$ divided by the asymptotic density of the set of all primes $\mathbb{P}$; and $k=$ the ratio of the asymptotic density of the prime subsequence $\mathbb{P'}$ to the asymptotic density of all primes $\mathbb{P}$. We now introduce a lemma:

\begin{lemma}
$ \text{For} \; x>1, j + k = 1.$
\begin{proof}
\begin{align*}
{j+k} \;\; & {= \dfrac{\ln{x}}{\ln{x}+1} + \dfrac{\ln{x}}{\ln{x}(\ln{x}+1)}} \\
& {= \dfrac{\ln{x}(\ln{x}+1)+\ln{x}\ln{x}(\ln{x}+1)}{\ln{x}(\ln{x}+1)^2}} \\
& {= \dfrac{\ln{x}(\ln{x}+1)(\ln{x}+1)}{\ln{x}(\ln{x}+1)^2}} \\
& {= 1}.
\end{align*}
\end{proof}
\end{lemma}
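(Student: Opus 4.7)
The plan is to observe that this identity is a straightforward cancellation, so the proof reduces to a one-line algebraic simplification once the right common denominator is chosen. First I would note that the hypothesis $x>1$ ensures $\ln x > 0$, which is needed so that (a) $\ln x$ may be cancelled from the numerator and denominator of $j$ and (b) the denominators $\ln x$ and $\ln x + 1$ are both nonzero.

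Next I would simplify $j$ directly: since $\ln x \neq 0$,
\begin{equation*}
j = \frac{\ln x}{\ln x (\ln x + 1)} = \frac{1}{\ln x + 1},
\end{equation*}
while $k = \dfrac{\ln x}{\ln x + 1}$ is already in lowest terms. Both fractions now share the common denominator $\ln x + 1$, so
\begin{equation*}
j + k = \frac{1}{\ln x + 1} + \frac{\ln x}{\ln x + 1} = \frac{1 + \ln x}{\ln x + 1} = 1.
\end{equation*}

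There is essentially no obstacle here beyond careful bookkeeping: the whole content of the lemma is that the defining quotient for $j$ telescopes after one cancellation, exposing the partition $\frac{1}{\ln x + 1} + \frac{\ln x}{\ln x + 1}$ of unity. The only subtlety worth flagging is the role of the hypothesis $x > 1$, which I would mention explicitly so that the reader sees that the cancellation step is legitimate and that we are not dividing by zero. I would avoid the more convoluted common-denominator $\ln x (\ln x + 1)^2$ route, since clearing denominators at that level obscures what is really a trivial rearrangement.
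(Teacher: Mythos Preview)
Your proof is correct and follows essentially the same approach as the paper: a direct algebraic verification that the two fractions sum to $1$. The only cosmetic difference is that you cancel $\ln x$ from $j$ before adding (giving the common denominator $\ln x + 1$), whereas the paper combines over the larger denominator $\ln x(\ln x+1)^2$ and factors afterward; your route is cleaner, but the content is identical.
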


\noindent Since $j + k = 1$ is valid in the lemma for all $x > 1$, we have established that

\begin{align*}
{\dfrac{1}{{\ln^j{p_r}}\cdot {\ln^k{p_r}}}} \;\; & {=\dfrac{1}{\ln{p_r}} \cdot \biggl[ {j+k} \biggr]} \\
& {= \dfrac{1}{\ln{p_r}}\cdot \biggl[ \dfrac{\ln{p_r}}{\ln{p_r}(\ln{p_r}+1)} + \dfrac{\ln{p_r}}{\ln{p_r}+1}\biggr]}. \\
\end{align*}

\noindent Thus, in harmony with the lemma, we have

\begin{equation}
{\pi{''(x)} \leq r'' + \dfrac{x}{\ln{p_r}(\ln{p_r}+1)} + 2^{r''} } \label{eq:14}
\end{equation}

\noindent where $r'' =$ the number of $p'' \leq p_r$ and $2^{r''}=$ the maximum error resulting from the main term. We now proceed with our estimate of \ref{eq:14}. We know that

\begin{equation*}
{r'' \leq {\lfloor{\dfrac{r}{2}}\rfloor} \;\;\; \text{for} \;\;\; p'' > 2}
\end{equation*}

\noindent because it was proven that there are fewer primes in the subsequence $\mathbb{P''}$ than in the complementary prime subsequence $\mathbb{P'}$ (recall Eq. \ref{eq:5}).  Thus,

\begin{align*}
{\pi{''(x)}} & {\leq r'' + \dfrac{x}{\ln{p_r}(\ln{p_r}+1)} + 2^{r''}} \;\;\;\;\;\;\;\;\;\;\;\;\;\;\;\;\;\;\; (\ref{eq:14}) \\ 
& {< r'' + \dfrac{x}{\ln{r}(\ln{r}+1)} + 2^{r''}} \;\;\;\;\;\;\;\;\;\;\;\;\;\;\;\;\;\;\; \left(r < p^r\right) \\
& {\leq\lfloor{\dfrac{r}{2}}\rfloor} + {\dfrac{x}{\ln{r}(\ln{r}+1)} + 2^{\lfloor{\frac{r}{2}}\rfloor}} \;\;\;\;\;\;\;\;\; \left({\lfloor{\dfrac{r}{2}}\rfloor} \geq r''\right) \\
&  {< \dfrac{x}{\ln{r}(\ln{r}+1)} + 2^{\lfloor{\frac{r}{2}}\rfloor+1}} \;\;\;\;\;\;\;\;\;\;\;\;\; \left(2^{\lfloor{\frac{r}{2}}\rfloor} >  {\lfloor{\dfrac{r}{2}}\rfloor}\right) \\
&  {< \dfrac{x}{\ln{r}(\ln{r}+1)} + 2^{{\frac{r}{2}}+1}} \;\;\;\;\;\;\;\;\;\;\;\;\;\;\;\;\;\;\; \left({\dfrac{r}{2}} >  {\lfloor{\dfrac{r}{2}}\rfloor}\right). \\
\end{align*}

\noindent Now, let

\begin{equation*}
{r = x^m \;\;\; \text{such that} \;\;\; m = {\frac{1}{c \cdot \ln{\ln{x}}}}} \;\;\; \text{for some positive constant} \;\; c.
\end{equation*}

\noindent We now have

\begin{align*}
{\pi{''(x)}} & {< \dfrac{x}{\ln{x^m} ({\ln{x^m}+1})} + 2^{\frac{1}{2}(x^m + 2)}} \;\;\;\;\;\;\;\;\;\;\;\;\;\;\;\;\;\;\; \left(  m = {\frac{1}{c \cdot \ln{\ln{x}}}} \right) \\ 
& {= \dfrac{x}{   \left( \dfrac{\ln{x}}{c \cdot \ln{\ln{x}}} \right)^2 +  \dfrac{\ln{x}}{c \cdot \ln{\ln{x}}}   } + 2^{\frac{1}{2}(x^m + 2)}} \\
& {= \dfrac{x}{   \dfrac{c\cdot\ln{\ln{x}} \cdot \ln^2{x} + (c\cdot\ln{\ln{x}})^2 \cdot \ln{x}}{(c\cdot\ln{\ln{x}})^3}}       + 2^{\frac{1}{2}(x^m + 2)}} \\
&  {= x \cdot \dfrac{(c \cdot \ln{\ln{x}})^2}{ \ln^2{x} +  c\cdot\ln{\ln{x}} \cdot \ln{x}}       + 2^{\frac{1}{2}(x^m + 2)}} \\
&  {< C \cdot x \cdot \dfrac{( \ln{\ln{x}})^2}{ \ln^2{x} +  c\cdot\ln{\ln{x}} \cdot \ln{x}}   + 2^{\frac{1}{2}(x^m + 2)}    }. \\
\end{align*}

\noindent Since $\ln{\ln{x}} \cdot \ln{x} <  c \cdot \ln{\ln{x}} \cdot \ln{x}$ for $c\geq1$, and since $2^{\frac{1}{2}(x^m + 2)} \ll$ than the main term when $c \geq 5$, we finally arrive at

\begin{equation}
{\pi''{(x)} < C \cdot x \cdot \dfrac{( \ln{\ln{x}})^2}{ (\ln{x})^2 +  \ln{\ln{x}} \cdot \ln{x}}}. \label{eq:15}
\end{equation}

\noindent Thus, we see that for some positive constant $C < +\infty$, the sum of the reciprocals of the infinite subsequence of prime numbers $\mathbb{P''}$ converges, and this is confirmed when we compare \ref{eq:15} to the count of $p_2 \leq x$ (see \ref{eq:20}). 

\section{Estimating ${\pi{'(x)}}$}

\noindent In order to calculate an estimate of $\pi{'(x)}$, we invoke the lemma and begin with 

\begin{equation}
{\pi{'(x)} \leq r' + \dfrac{x}{\ln{p_r}+1} + 2^{r'} }. \label{eq:16}
\end{equation}

\noindent Proceeding as before, we know that

\begin{equation*}
{r' \leq {\lfloor {r} \rfloor } \;\;\; \text{for} \;\;\; p' \geq 2}
\end{equation*}

\noindent since there are fewer primes in the subsequence $\mathbb{P'}$ than in the set of all prime numbers $\mathbb{P}$.  Thus,

\begin{align*}
{\pi{'(x)}} & {\leq r' + \dfrac{x}{\ln{p_r}+1} + 2^{r'}} \;\;\;\;\;\;\;\;\;\;\;\;\;\;\;\;\;\;\;\;\;\;\; (\ref{eq:16}) \\ 
& {< r' + \dfrac{x}{\ln{r}+1} + 2^{r'}} \;\;\;\;\;\;\;\;\;\;\;\;\;\;\;\;\;\;\; \left( {r < p^r} \right) \\
& {\leq\lfloor{{r}}\rfloor} + {\dfrac{x}{\ln{r}+1} + 2^{\lfloor{{r}}\rfloor}} \;\;\;\;\;\;\;\;\;\;\;\; \left({\lfloor{{r}}\rfloor} \geq r'\right) \\
&  {< \dfrac{x}{\ln{r}+1} + 2^{\lfloor{{r}}\rfloor+1}} \;\;\;\;\;\;\;\;\;\;\;\;\;\; \left(2^{\lfloor{{r}}\rfloor} >  {\lfloor{{r}}\rfloor}\right) \\
&  {< \dfrac{x}{\ln{r}+1} + 2^{{{r}}+1}} \;\;\;\;\;\;\;\;\;\;\;\;\;\;\;\;\;\;\;\;\;\; \left({r} >  {\lfloor{{r}}\rfloor}\right). \\
\end{align*}

\noindent Now, let

\begin{equation*}
{r = x^m \;\;\; \text{such that} \;\;\; m = {\frac{1}{c \cdot \ln{\ln{x}}}}} \;\;\; \text{for some positive constant} \;\; c.
\end{equation*}

\noindent We now have

\begin{align*}
{\pi{'(x)}} & {< \dfrac{x}{{\ln{x^m}+1}} + 2^{x^m + 1}} \;\;\;\;\;\;\;\;\;\;\;\;\;\;\;\;\;\;\; \left( m = {\frac{1}{c \cdot \ln{\ln{x}}}}\right) \\ 
& {= \dfrac{x}{    \dfrac{\ln{x}}{c \cdot \ln{\ln{x}}}  +  1    } + 2^{x^m + 1}} \\
& {= \dfrac{x}{   \dfrac{ \ln{x} + c\cdot\ln{\ln{x}}}{c\cdot\ln{\ln{x}}}  }       + 2^{x^m + 1}} \\
&  {= x \cdot \dfrac{c \cdot \ln{\ln{x}}}{ \ln{x} + c\cdot\ln{\ln{x}}}       +  2^{x^m + 1}} \\
&  {< C \cdot x \cdot \dfrac{ \ln{\ln{x}}}{ \ln{x} +  c\cdot\ln{\ln{x}}}   +  2^{x^m + 1}    }. \\
\end{align*}

\noindent Since $\ln{\ln{x}}  <  c \cdot \ln{\ln{x}}$ for $c\geq1$, and since $2^{x^m + 1} \ll$ than the main term when $c \geq 5$, we finally arrive at

\begin{equation*}
{\pi'{(x)} < C \cdot x \cdot \dfrac{ \ln{\ln{x}}}{ \ln{x} +\ln{\ln{x}} }}. \label{eq:17}
\end{equation*}

\noindent Since we've shown that $\mathbb{P''}$ is a small set in that the infinite sum of its reciprocals converges, and since it is known that the sum of the reciprocals of the set of all prime numbers $\mathbb{P}$ diverges, we can deduce from the relation 

\begin{equation*}
{\mathbb{P{}}}={\mathbb{P{'}}+\mathbb{P{''}}}
\end{equation*}

\noindent that the prime number subsequence $\mathbb{P'}$ is a large set and that the infinite sum of its reciprocals diverges.

\section{Estimating ${\pi{_2(x)}}$}

\noindent We now take a look at how the twin prime count ${\pi{_2(x)}}$ can be estimated using the technique heretofore disclosed. If we assume that

\begin{equation*}
{{\pi{_2(x)}} \sim \dfrac{C}{\ln^2{x}}} \label{eq:18}
\end{equation*}

\noindent for some positive constant $C$ \cite{HardyLittlewood1923}, we can then model $j$ found on the RHS of Eq. \ref{eq:13} as

\begin{equation*}
{j = \dfrac{{\dfrac{C}{\ln^2{p_r}}}}{\dfrac{1}{\ln{p_r}}} = C \cdot \dfrac{1}{\ln{p_r}}}
\end{equation*}

\noindent and we can model $k$ found on the RHS of Eq. \ref{eq:13} as 

\begin{equation*}
{k = 1 - C \cdot \dfrac{1}{\ln{p_r}}}
\end{equation*}

\noindent such that $j=$ the ratio of the asymptotic density of the twin prime subsequence $\mathbb{P}_2$ divided by the asymptotic density of the set of all primes $\mathbb{P}$; and $k=$ the ratio of the asymptotic density of the set of remaining prime numbers [$ \mathbb{P} - \mathbb{P}_2$] to the asymptotic density of all primes $\mathbb{P}$ so that $j+k=1$. We can then model $\pi{_2(x)}$ as

\begin{align}
{\pi{_2(x)}} & {\leq r_2 + x \cdot \dfrac{1}{\ln{p_r}} \cdot \left[ C \cdot \dfrac{1}{\ln{p_r}} \right] + 2^{r_2}}  \\ \label{eq:19}
& {= r_2 + x \cdot   C \cdot \dfrac{1}{\ln^2{p_r}}  + 2^{r_2}} 
\end{align}

\noindent where $r_2=$ the number of $p_2 \leq p_r$ and $2^{r_2}=$ the maximum error resulting from the main term. Similar to $r''$, we know that

\begin{equation*}
{r_2 \leq {\lfloor{\dfrac{r}{2}}\rfloor} \;\;\; \text{for} \;\;\; p_2 > 2}
\end{equation*}

\noindent because there are fewer twin primes $\mathbb{P}_2$ than half the count of all prime numbers $\mathbb{P}$.  Thus,

\begin{align*}
{\pi{_2(x)}} & {\leq r_2 + x \cdot \dfrac{C}{\ln^2{p_r}} + 2^{r_2}} \;\;\;\;\;\;\;\;\;\;\;\;\;\;\;\;\;\;\;\;\;\;\;\; \left( \ref{eq:19} \right) \\\ 
& {< r_2 + x \cdot \dfrac{C}{\ln^2{r}} + 2^{r_2}} \;\;\;\;\;\;\;\;\;\;\;\;\;\;\;\;\;\;\;\;\; \left(r < p^r\right) \\
& {\leq\lfloor{\dfrac{r}{2}}\rfloor} + {x \cdot \dfrac{C}{\ln^2{r}} + 2^{\lfloor{\frac{r}{2}}\rfloor}} \;\;\;\;\;\;\;\;\;\;\; \left( {\lfloor{\dfrac{r}{2}}\rfloor} \geq r_2\right) \\
&  {< x \cdot \dfrac{C}{\ln^2{r}} + 2^{\lfloor{\frac{r}{2}}\rfloor+1}} \;\;\;\;\;\;\;\;\;\;\;\;\;\;\; \left(2^{\lfloor{\frac{r}{2}}\rfloor}  >  {\lfloor{\dfrac{r}{2}}\rfloor}\right) \\
&  {< x \cdot \dfrac{C}{\ln^2{r}} + 2^{{\frac{r}{2}}+1}} \;\;\;\;\;\;\;\;\;\;\;\;\;\;\;\;\;\;\;\;\;\; \left({\dfrac{r}{2}} >  {\lfloor{\dfrac{r}{2}}\rfloor} \right). \\
\end{align*}

\noindent Now, let

\begin{equation*}
{r = x^m \;\;\; \text{such that} \;\;\; m = {\frac{1}{c \cdot \ln{\ln{x}}}}} \;\;\; \text{for some positive constant} \;\; c. \;\; \text{We now have}
\end{equation*}

\begin{align*}
{\pi{_2(x)}} & {< x \cdot \dfrac{C}{\ln^2{x^m}} + 2^{\frac{1}{2}(x^m + 2)}}  \\ 
& {= x \cdot \dfrac{C}{   \left( \dfrac{\ln{x}}{c \cdot \ln{\ln{x}}} \right)^2    } + 2^{\frac{1}{2}(x^m + 2)}} \\
&  {= x \cdot \dfrac{C \cdot (c \cdot \ln{\ln{x}})^2}{ (\ln{x})^2}       + 2^{\frac{1}{2}(x^m + 2)}} \\
&  {= C \cdot x \cdot \dfrac{( \ln{\ln{x}})^2}{ (\ln{x})^2 }} + 2^{\frac{1}{2}(x^m + 2)}. \\
\end{align*}

\noindent And since  $2^{\frac{1}{2}(x^m + 2)} \ll$ than the main term for $c \geq 5$, we arrive at

\begin{equation}
{\pi_2{(x)} < C \cdot x \cdot \dfrac{( \ln{\ln{x}})^2}{ (\ln{x})^2 }}. \label{eq:20}
\end{equation}

\noindent Thus, it is confirmed via this approach that for some positive constant $C < +\infty$, the sum of the reciprocals of the twin primes $\mathbb{P}_2$ converges. Further, when we compare the inequality \ref{eq:20} with the inequality for $\mathbb{P}''$ in \ref{eq:15}, we see that the count of $p'' \leq x$, or $\pi''{(x)}$, is less than the count of twin primes $p_2 \leq x$, or $\pi_2{(x)}$.

\section{Mathematica calculations}

\noindent Mathematica \cite{Mathematica} was programmed to calculate the sum of the reciprocals of $\mathbb{P''}$ and $\mathbb{P}_2$ for various ranges of $x$ up to $10E6$, and a table of the computations appears below. Table \ref{table:reciprocal} reveals that through the ranges of $x$ calculated, the sum of the reciprocals of $p''$ is smaller than the sum of the reciprocals for the twin primes $p_2$, both of which converge at $\infty$.

\begin{table}[ht]
\centering
\caption{$p''(x)$ and $p_2(x)$ reciprocal sums}
\label{table:reciprocal}
\begin{tabular}{|c|c|c|c|c|c|c|c|}
\hline 
$x$ & $\sum{\frac{1}{p''(x)}}$ & $\sum{\frac{1}{p_2(x)}}$  \\
\hline 
1E02 & 0.534430 & 1.28989    \\
\hline 
1E03 & 0.606479 & 1.40995    \\
\hline 
1E04 & 0.644283 & 1.47370    \\
\hline 
1E05 & 0.668046 & 1.51443    \\
\hline 
1E06 & 0.683968 & 1.54268    \\
\hline 
2E06 & 0.687789 & 1.54950    \\
\hline 
3E06 & 0.689858 & 1.55321    \\
\hline 
4E06 & 0.691258 & 1.55573    \\
\hline 
5E06 & 0.692310 & 1.55763    \\
\hline 
6E06 & 0.693139 & 1.55915    \\
\hline 
7E06 & 0.693834 & 1.56040    \\
\hline 
8E06 & 0.694421 & 1.56148    \\
\hline 
9E06 & 0.694932 & 1.56240    \\
\hline 
10E6 & 0.695379 & 1.56322    \\
\hline 
\end{tabular}
\end{table}

\newpage

\section{Conclusion}

\noindent In this paper, we applied the Inclusion-Exclusion Principle to the complementary prime number subsequences $\mathbb{P}'$ and $\mathbb{P}''$ to derive the respective prime counting functions $\pi'{(x)}$ and $\pi''{(x)}$ to determine whether these subsequences form a small set or a large set and thus whether the infinite sum of the inverse of their terms converges or diverges. In this study, we concluded that the sum of the reciprocals of the prime number subsequence $\mathbb{P}'$ diverges, similar to that for the set of all prime numbers $\mathbb{P}$, while the sum of the reciprocals of the prime number subsequences $\mathbb{P}''$ converges, similar to that for the set of all twin primes $\mathbb{P}_2$.

\newpage

\end{document}